\theoremstyle{definition}
\newtheorem{definition}{Definition}
\theoremstyle{plain}
\newtheorem{theorem}{Theorem}
\newtheorem{remark}{Remark}
\newtheorem{lemma}{Lemma}
\newtheorem{proposition}{Proposition}
\newtheorem{corollary}{Corollary}
\newtheorem{construction}{Construction}
\title{Smooth Fractal Trees: Analytic Generators and Discrete Equivalence}
\author{
Henk Mulder\thanks{Independent researcher. Email: \texttt{mulder@skystrategy.com}}
}
\date{}
\begin{document}
\maketitle

\begin{abstract}
\noindent\textbf{2020 Mathematics Subject Classification:} 37C70, 34C45 (Primary), 28A80 (Secondary)
\medskip

We introduce a framework for constructing fractal trees via analytic generator
fields, replacing discrete affine transformations and symbolic rewriting rules
by the integration of smooth vector fields defined by ordinary differential
equations in an internal state space. In this setting, geometric curves are
obtained as projections of generator trajectories, and branching is implemented
as a primitive operation through exact inheritance of generator state.

At every finite depth, the resulting structure is a finite union of analytic
curve segments that is smooth across branch events. Two structural results
relate this generator-driven construction to classical discrete models of
tree-based fractals. First, a combinatorial universality theorem shows that any
discrete tree specification, including those arising from iterated function
systems and L-systems, can be compiled into an analytic generator tree whose
induced discrete scaffold is isomorphic at every finite depth. Second, under
standard contractive assumptions, a canopy set equivalence theorem establishes
that the accumulation set of analytic branch endpoints coincides with the
attractor of the corresponding discrete construction.

These results separate local geometric regularity from global fractal
complexity, showing that fractality is determined by recursive branching and
scaling rather than by local non-smoothness. By embedding discrete self-similar
trees into a smooth dynamical systems setting, the framework provides a
structural bridge between combinatorial fractal models and analytic generator
dynamics, while preserving both finite combinatorial structure and asymptotic
limit geometry.
\end{abstract}

\section{Introduction}

Classical fractal geometry is typically developed through discrete generative 
mechanisms: iterated function systems, L-systems, and related symbolic grammars
\cite{mandelbrot1982fractal,hutchinson1981,prusinkiewicz1990}. These frameworks 
generate a nested family of finite trees or polygonal approximants and are most 
often analyzed through the properties of their limiting attractors. In many 
applications, however, the intermediate tree geometry---the finite-depth 
branching structure and the shape of its branches---is as important as the 
limit set.

In an earlier paper \cite{mulder2015} the author observed that discrete tree 
structures can be emulated by trees constructed from analytic curves and 
branchpoints and still produce equivalent fractal limit sets. The present paper 
formalizes this as a generator-first alternative in which tree geometry is 
produced by integrating smooth dynamical processes. We work in a generator 
domain $J\subset\mathbb{R}$ and an internal state space $\mathbb{R}^n$ governed by 
an analytic generator field. A realized branch in $\mathbb{R}^d$ is obtained only 
after projection of the integrated state trajectory. Branching is not an emergent 
geometric event (e.g.\ a kink, attachment, or splice); it is declared as a 
primitive operation implemented by exact state inheritance at prescribed 
generator phases. Consequently, at every finite depth the constructed tree is a 
finite union of analytic curve segments, and no branchpoint singularities are 
introduced by construction.

This paper does not introduce new fractal invariants or dimension-theoretic
results. Its contribution is structural: it provides an analytic realization
framework that preserves the combinatorial growth and asymptotic limit geometry
of classical discrete tree-based fractals.

Our objective is to clarify what is structurally essential in tree-based 
fractality by separating \emph{combinatorial growth} from \emph{local geometric 
realization}. The key question is whether a smooth, generator-driven process can 
match discrete formalisms at the level that determines the limit set.

We prove two main results. First, a \emph{combinatorial universality theorem}: 
any discrete tree-based fractal specification determines a rooted tree with 
per-branch parameters (contraction ratios, orientation labels), and there exists 
an analytic generator tree whose induced discrete scaffold is isomorphic to this 
discrete tree at every finite depth. When an embedding is given, branchpoints can 
be chosen to coincide with the discrete node locations. Second, a \emph{canopy set 
equivalence theorem}: under standard contractive hypotheses, the canopy set 
(the accumulation set of branch endpoints) coincides with the attractor of the 
corresponding discrete construction.

Together, these results show that for tree-based fractals, the global limit set 
is determined by recursive branching and contraction rather than by local 
non-differentiability. Analytic generators therefore constitute a smooth 
compilation target for classical discrete specifications, placing fractal trees
within the scope of differential and dynamical systems tools while preserving 
their finite combinatorial structure and asymptotic limit geometry.

\section{Analytic generators: definitions and framework}

\subsection{Generator space and state space}

Let $J \subset \mathbb{R}$ be a connected open interval, referred to as the \emph{generator domain}.  
Elements $s \in J$ index generative progression and impose a total ordering on generator evolution.  
No geometric meaning (such as arc length or time) is assumed for $s$.

When constructing specific branches, we work with compact subintervals $[s_0,s_1]\subset J$. The open 
domain assumption ensures analyticity extends to branch endpoints, which serve as the locations 
of branch events.

Let $X : J \to \mathbb{R}^n$ denote the \emph{generator state}, with
\[
X(s) = (x_1(s), \dots, x_n(s)).
\]
The state space $\mathbb{R}^n$ is an internal space whose components encode all information required for geometric realization. The choice of $n$ is application-dependent.

\subsection{Generator fields}

\begin{definition}[Analytic Generator Field]
An \emph{analytic generator field} is a map
\[
V : J \times \mathbb{R}^n \to \mathbb{R}^n
\]
that is analytic (or $C^k$, $k \ge 1$) in both arguments.
\end{definition}

A generator field defines the ordinary differential equation
\begin{equation}
\label{eq:generator_ode}
\frac{dX}{ds} = V(s, X),
\end{equation}
together with an initial condition
\[
X(s_0) = X_0 \in \mathbb{R}^n.
\]

\begin{proposition}[Local Realizability]
\label{prop:local_realizability}
Let $V : J \times \mathbb{R}^n \to \mathbb{R}^n$ be an analytic (or $C^k$, $k \ge 1$) generator field, and let $(s_0, X_0) \in J \times \mathbb{R}^n$ be an initial condition.  
Then there exists $\varepsilon > 0$ and a unique solution
\[
X : (s_0 - \varepsilon,\, s_0 + \varepsilon) \to \mathbb{R}^n
\]
to the ordinary differential equation \eqref{eq:generator_ode} satisfying $X(s_0) = X_0$
(cf.\ Picard--Lindel\"of).
\end{proposition}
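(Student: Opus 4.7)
The plan is to invoke the classical Picard--Lindel\"of theorem, with only a short verification that the present hypotheses supply the regularity it requires. Since $V$ is $C^k$ with $k \ge 1$ (or analytic) on the open set $J \times \mathbb{R}^n$, it is in particular continuously differentiable in its second argument, and hence locally Lipschitz in the state variable on any compact neighborhood of $(s_0, X_0)$. I would begin by choosing a closed cylinder $K = [s_0 - a, s_0 + a] \times \overline{B}(X_0, b) \subset J \times \mathbb{R}^n$, which is possible because $J$ is open and $s_0 \in J$. On $K$ set $M = \sup_K \|V\|$ and let $L$ be the Lipschitz constant of $V$ in $X$; both are finite by compactness of $K$ together with the $C^1$ hypothesis.

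Next, I would recast the initial value problem as the integral equation
\[
X(s) = X_0 + \int_{s_0}^s V(\tau, X(\tau))\, d\tau,
\]
and define $\varepsilon = \min\{a,\, b/M,\, 1/(2L)\}$. On the complete metric space $E$ of continuous maps $X : [s_0 - \varepsilon, s_0 + \varepsilon] \to \overline{B}(X_0, b)$ equipped with the uniform norm, the Picard operator $T[X](s) = X_0 + \int_{s_0}^s V(\tau, X(\tau))\, d\tau$ is self-mapping (by $\varepsilon M \le b$) and a strict contraction (by $\varepsilon L \le 1/2$). Banach's fixed point theorem then delivers a unique fixed point in $E$, and the fundamental theorem of calculus upgrades this continuous fixed point to a $C^1$ solution of \eqref{eq:generator_ode} on the open interval $(s_0 - \varepsilon, s_0 + \varepsilon)$.

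To promote uniqueness from the small tube $\overline{B}(X_0, b)$ to the class of all $C^1$ solutions, I would apply a standard Gr\"onwall argument: any two solutions agreeing at $s_0$ satisfy $\|X_1(s) - X_2(s)\| \le L \int_{s_0}^s \|X_1(\tau) - X_2(\tau)\|\, d\tau$ on a common subinterval where both remain in $K$, forcing them to coincide there, and a continuation argument extends the conclusion over the maximal interval of common existence.

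There is no substantive obstacle here, as this is a textbook result; the only care required is bookkeeping, namely observing that the openness of $J$ provides room for a symmetric interval about $s_0$, and that the $C^1$ (or analytic) hypothesis on $V$ is exactly what renders the local Lipschitz constant $L$ finite. Strengthening the conclusion to analyticity of $X$ in the analytic case is not asked for in the statement, but would follow from the Cauchy--Kovalevskaya theorem applied to the complexified system; I would mention this only as a remark.
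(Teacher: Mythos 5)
Your argument is correct and is exactly the route the paper intends: the paper offers no proof at all, simply citing Picard--Lindel\"of, and your proposal is a faithful reproduction of that classical theorem's standard contraction-mapping proof (self-mapping cylinder, Banach fixed point, Gr\"onwall for global uniqueness). The only quibble is cosmetic: analyticity of the solution in the analytic case is classically Cauchy's majorant theorem for ODEs rather than Cauchy--Kovalevskaya proper, but as you note this is not required by the statement.
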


This solution is referred to as a \emph{generator realization}. All geometry in this framework arises as a derived object from $X(s)$.

\begin{remark}[Role of Analyticity.]
Generator fields are assumed analytic throughout this work. This assumption has two distinct roles:

First, it ensures \textit{maximal regularity} of realized branches and compatibility with the full toolkit of analytic methods from 
differential geometry and dynamical systems. The analytic structure of generator-driven fractals is a key distinguishing feature: it 
enables, in principle, local power series expansions, complex-analytic extensions, 
and differential operator techniques that are unavailable for discrete or piecewise-linear fractal constructions.

Second, from a purely \textit{structural} perspective, analyticity is not essential: all main results---branch inheritance, finite-stage 
smoothness, combinatorial universality (Theorem~1), and canopy set equivalence (Theorem~2)---remain valid for generator fields of class 
$C^k$, $k \geq 1$, under standard existence and uniqueness assumptions for ODEs.

We work with analytic generators to fully exploit the differential structure this framework affords, while noting that the core 
geometric and combinatorial mechanisms operate under weaker regularity assumptions.
\end{remark}

\subsection{Geometric realization}

Let $\Pi : \mathbb{R}^n \to \mathbb{R}^d$ be a projection or interpretation map selecting geometric components of the state. The realized curve is defined by
\begin{equation}
\label{eq:realized_curve}
\gamma(s) = \Pi(X(s)) \in \mathbb{R}^d.
\end{equation}

The choice of $\Pi$ is fixed for a given construction and is not part of the generator dynamics.  
Geometry is therefore \emph{realized}, not prescribed.

\subsection{Canonical planar generator example}

As a concrete example, consider a planar curve with curvature-driven evolution.  
Let
\[
X(s) = (x(s), y(s), \theta(s)) \in \mathbb{R}^3,
\]
where $(x,y)$ represent position and $\theta$ represents orientation.

Define the generator field
\begin{equation}
\label{eq:planar_generator}
\begin{aligned}
\frac{dx}{ds} &= \rho(s)\cos\theta(s), \\
\frac{dy}{ds} &= \rho(s)\sin\theta(s), \\
\frac{d\theta}{ds} &= \kappa(s),
\end{aligned}
\end{equation}
where $\rho(s)$ and $\kappa(s)$ are analytic scalar functions.

We assume $\rho(s) > 0$ for all $s \in J$, ensuring that the realized curve is regular.

The realized curve is obtained via the projection
\[
\Pi(x,y,\theta) = (x,y).
\]

This generator defines a smooth planar curve whose local geometry is controlled analytically by $\rho$ and $\kappa$. Global geometric complexity arises solely from the structure of these generator functions and from branching operations introduced later.

This representation reflects the classical fact that a regular planar curve
is uniquely determined (up to rigid motion) by its curvature and speed functions
\cite{docarmo2016}.

\subsection{Generator--geometry separation}

Equations \eqref{eq:generator_ode}--\eqref{eq:realized_curve} formalize the separation between:
\begin{itemize}
\item generator evolution in state space,
\item and geometric realization in embedding space.
\end{itemize}

At no point is geometry specified directly. All geometric properties of $\gamma$ are consequences of the analytic generator field and its integration.

This generator-first formulation underlies all subsequent constructions in this work.

\section{Branching as a primitive operation}
Branching is treated as a primitive operation of the framework, rather than as an emergent consequence of the generator dynamics or geometric singularities.

\begin{definition}[Branch Event]
A \emph{branch event} occurs at a generator parameter value $s=s_b \in J$
when the realization $X$ spawns a finite family of child realizations
$\{X_i\}_{i=1}^m$, for some \emph{branch multiplicity} $m \in \mathbb{N}$, each defined on its own
generator interval $J_i \subset \mathbb{R}$ with distinguished origin
$0 \in J_i$, such that
\begin{equation}
\label{eq:state_inheritance}
X_i(0) = X(s_b), \qquad i=1,\dots,m.
\end{equation}
\end{definition}

Equation \eqref{eq:state_inheritance} is the \emph{state inheritance rule}. It asserts that branching is implemented as continuation of the internal state, not as geometric attachment. In particular, there is no additional branch-dependent offset term added to $X_i(0)$.

\begin{definition}[No-Offset Branching Constraint]
A branching rule satisfies the \emph{no-offset constraint} if for every child realization $X_i$ spawned at $s_b$,
\[
X_i(0) = X(s_b)
\]
holds exactly, i.e.\ no branch-dependent translation, rotation, or angular offset is applied at the event.
\end{definition}

When the state $X$ includes orientation variables (e.g.\ $\theta$ in \eqref{eq:planar_generator}), the no-offset constraint implies that the tangent direction at the start of each child branch is inherited:
\[
\theta_i(0) = \theta(s_b).
\]
Thus, branching does not introduce geometric kinks by construction; any divergence in geometry arises only through subsequent generator evolution.

\subsection{Generator inheritance and modification}

Each child realization $X_i$ evolves according to an assigned child generator field
\[
V_i : J_i \times \mathbb{R}^n \to \mathbb{R}^n,
\qquad
\frac{dX_i}{ds} = V_i(s, X_i),
\qquad
X_i(0)=X(s_b).
\]

\begin{definition}[Generator Inheritance]
A child generator field $V_i$ \emph{inherits} the parent generator field $V$ if there exists a reparameterization $\phi_i : J_i \to J$ with $\phi_i(0)=s_b$ such that
\begin{equation}
\label{eq:generator_inheritance}
V_i(s, x) = \dot{\phi}_i(s)\, V(\phi_i(s), x)
\end{equation}
for all $(s,x)$ in the domain of $V_i$.
\end{definition}

In many constructions, children inherit the same functional form of generator data but with controlled modifications (e.g.\ sign changes, scaling, or parameter shifts). The following planar specialization makes this explicit.

\paragraph{Planar specialization.}
For the canonical planar state $X=(x,y,\theta)$ and generator \eqref{eq:planar_generator}, define child generators by
\begin{equation}
\label{eq:binary_inheritance}
\rho_i(s) = \lambda_i\,\rho(s), 
\qquad
\kappa_i(s) = \sigma_i\,\kappa(s),
\end{equation}
where $\lambda_i>0$ is a scaling factor and $\sigma_i \in \{+1,-1\}$ selects left/right turning. Each child then evolves by
\[
\frac{dx_i}{ds}=\rho_i(s)\cos\theta_i,\quad
\frac{dy_i}{ds}=\rho_i(s)\sin\theta_i,\quad
\frac{d\theta_i}{ds}=\kappa_i(s),
\quad
X_i(0)=X(s_b).
\]
Binary branching corresponds to $m=2$ with $\sigma_1=+1$ and $\sigma_2=-1$; multi-branching corresponds to $m>2$ with distinct parameter choices $(\lambda_i,\sigma_i)$ or more general modifications of $(\rho,\kappa)$.

\begin{proposition}[Regularity Across Branch Events]
Assume each $V_i$ is analytic (or $C^k$) and the no-offset constraint holds. Then each child realization $X_i$ is analytic (resp.\ $C^k$) on $J_i$, and the realized geometry $\gamma_i(s)=\Pi(X_i(s))$ is continuous at the branch event in the sense that
\[
\gamma_i(0)=\gamma(s_b)
\]
for all children $i$.
\end{proposition}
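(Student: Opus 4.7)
The plan is to reduce the proposition to two essentially independent observations: a regularity claim for $X_i$ coming from the ODE theory already invoked, and a pointwise coincidence claim for $\gamma_i$ coming directly from the no-offset constraint. Neither requires machinery beyond Proposition~\ref{prop:local_realizability} and the definitions of the branching and realization rules.

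For the regularity of $X_i$ on $J_i$, I would apply Proposition~\ref{prop:local_realizability} to each child initial value problem $dX_i/ds = V_i(s, X_i)$ with initial condition $X_i(0) = X(s_b)$. The no-offset constraint supplies this initial condition as a well-defined point of $J_i \times \mathbb{R}^n$, and the hypothesis that $V_i$ is analytic (resp.\ $C^k$) places us exactly in the setting of that proposition, yielding a unique local solution of the asserted regularity class in a neighborhood of $0 \in J_i$. Extension to all of $J_i$ then follows from the standard maximal-interval continuation argument for ODEs: analyticity and $C^k$-regularity are local properties preserved along the flow, so the solution on any compact sub-interval of the maximal interval of existence inherits the same regularity. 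I would note that $V_i \in C^k$ with $k \geq 1$ implies local Lipschitz continuity, which is the standard hypothesis underwriting this extension.

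The continuity statement $\gamma_i(0) = \gamma(s_b)$ is a direct substitution from the definitions. Since $\gamma(s) = \Pi(X(s))$ and $\gamma_i(s) = \Pi(X_i(s))$, and since the no-offset constraint gives $X_i(0) = X(s_b)$ exactly, applying $\Pi$ yields $\gamma_i(0) = \Pi(X_i(0)) = \Pi(X(s_b)) = \gamma(s_b)$ for each $i$. No regularity of $\Pi$ beyond its status as a fixed map is needed, because only a single-point equality—not smoothness of $\gamma$ across the branch event—is claimed.

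The only genuine subtlety in the argument is the gap between the local solution produced by Picard--Lindel\"of and the global solution on $J_i$ asserted by the proposition; this is the one place where a reader could legitimately ask for more. I would handle it with a brief remark about maximal intervals of existence, pointing out that for analytic $V_i$ one can alternatively appeal to analytic continuation of ODE solutions along the parameter. Everything else in the proof is essentially bookkeeping: the no-offset rule does all the work for continuity, and the regularity hypothesis on $V_i$ does all the work for smoothness of $X_i$.
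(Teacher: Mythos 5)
Your proof is correct and follows essentially the same route the paper takes: the paper leaves this proposition unproved in the text, but its proof of the analogous Finite-Stage Smoothness proposition uses exactly your two ingredients --- standard ODE regularity theory (Proposition~\ref{prop:local_realizability}) for smoothness of each $X_i$, and the state inheritance rule $X_i(0)=X(s_b)$ composed with $\Pi$ for continuity at the branch event. Your added remark about the local-to-global gap (existence of the solution on all of $J_i$, not just near $0$) is a point the paper glosses over, and flagging it is appropriate.
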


No additional continuity conditions are imposed; the branch event is defined entirely in generator space by \eqref{eq:state_inheritance}.

\begin{remark}[Local reset versus global generator progress]
\label{rem:local-vs-global-progress}
At a branch event, each child realization is naturally parameterized by a local
generator coordinate $s$ with origin at the branch point. This local reset is
sufficient for generators whose behavior depends only on local structure.

However, certain generator terms---such as global shrinkage profiles, environment
fields, or accumulated effects---require continuity of an absolute notion of
generator progress across branch events. This can be achieved either by 
reparameterizing the child generator via a map $\phi_i(s)$ satisfying 
$\phi_i(0)=s_b$, or by introducing a global progress variable $\tau$ inherited 
unchanged at branching.

In the latter formulation, all branches share the same generator phase $\tau$, 
and shrinkage or decay is encoded directly in the shared generator profile 
$\rho(\tau)$. Branch-specific scaling factors are then unnecessary ($\lambda_i = 1$), 
with branching effects arising solely through state inheritance and discrete 
generator modifications (e.g.\ sign changes in curvature).
\end{remark}

The inheritance rule \eqref{eq:generator_inheritance} is consistent with standard 
reparameterizations of vector fields in ordinary differential equation theory, 
which preserve solution orbits under changes of the evolution parameter 
(see \cite{hirsch_smales_devaney}). It also parallels the reuse of a common 
functional template with parameterized variants in iterated function systems 
\cite{hutchinson1981,falconer2003fractal}, with the key distinction that inheritance 
here acts on generator fields rather than geometric maps and introduces no 
geometric offsets.
\subsection{Tree structure in generator space}

A branch event at $s_b$ induces a rooted tree structure on the set of realizations. Formally, let $\mathcal{N}$ denote the set of all realized branches generated by repeated application of branch events starting from a distinguished root realization $X^{(0)}$. Define a directed edge
\[
X^{(p)} \to X^{(c)}
\]
whenever $X^{(c)}$ is spawned as a child of $X^{(p)}$ at some branch parameter value $s_b$.

\begin{definition}[Generator Tree]
The directed graph $(\mathcal{N},\mathcal{E})$, where $\mathcal{E}\subset \mathcal{N}\times\mathcal{N}$ is the parent--child relation induced by branch events, is called the \emph{generator tree}. Its topology is determined entirely by the event schedule $\{s_b\}$ and the branching multiplicities $m$, independent of the geometric embedding of realized branches.
\end{definition}

The realized geometric tree is the embedded image
\[
\mathcal{T} \;=\; \bigcup_{X \in \mathcal{N}} \; \Pi\!\left(X(J_X)\right) \;\subset\; \mathbb{R}^d,
\]
where $J_X$ denotes the generator interval associated with branch $X$. Thus, branching is organized in generator space, and geometric complexity arises as a consequence of (i) the generator dynamics on each branch and (ii) the combinatorial structure of the generator tree, rather than from geometric attachment rules or symbolic grammars.

In particular, the role played by rewriting rules in classical tree grammars is assumed here by explicit branch events together with generator inheritance and modification. The generative mechanism remains analytic throughout.

\section{Construction of analytic fractal trees}

\subsection{Elementary generator examples}

We begin by specifying elementary analytic generator fields that serve as building blocks for fractal tree constructions. All examples are formulated within the planar state space
\[
X(s) = (x(s), y(s), \theta(s)) \in \mathbb{R}^3,
\]
with geometric realization $\gamma(s) = (x(s), y(s))$.

\paragraph{Constant curvature generators.}
Let $\rho_0 > 0$ and $\kappa_0 \in \mathbb{R}$ be constants. Define
\begin{equation}
\label{eq:constant_curvature}
\begin{aligned}
\frac{dx}{ds} &= \rho_0 \cos \theta, \\
\frac{dy}{ds} &= \rho_0 \sin \theta, \\
\frac{d\theta}{ds} &= \kappa_0 .
\end{aligned}
\end{equation}
The realized curve is a circular arc (or straight line if $\kappa_0=0$). 
This generator provides a locally smooth, curvature-controlled primitive 
from which more complex structures are assembled. An example is shown in Figure 1.

\begin{figure}[htbp]
  \centering
  \includegraphics[width=\textwidth]{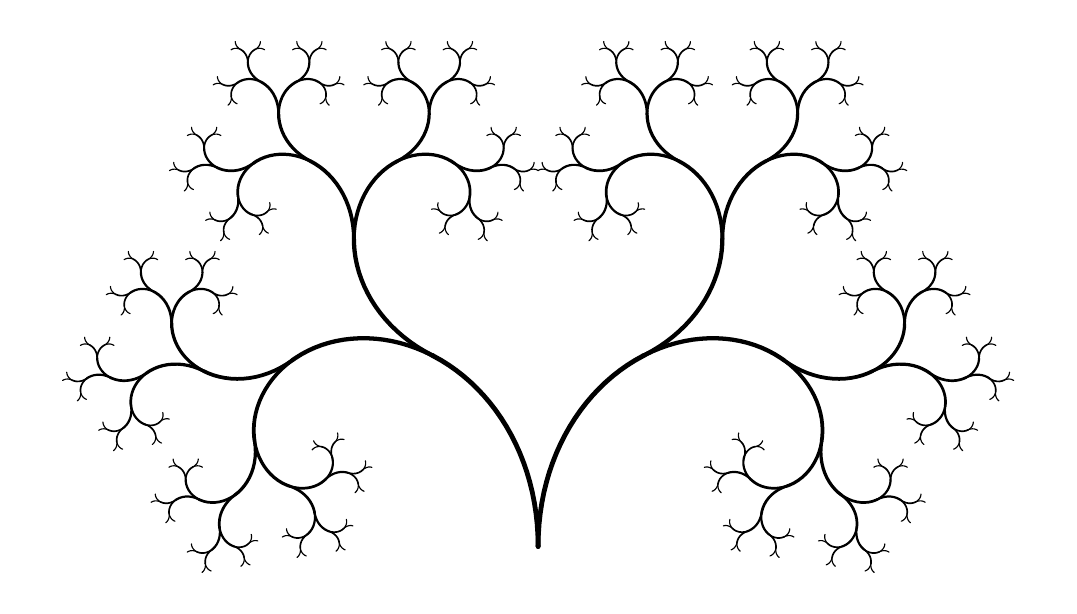}
  \caption{Analytic fractal tree constructed via generator-driven dynamics with exponential 
  radial decay $\rho(s) = A^s$ ($A = 0.88$) and linear angular modulation 
  $\theta(s) = \theta_0 \pm \Omega s$ ($\Omega = \pi/10$). Binary branching is implemented 
  through exact state inheritance at fixed generator phase $s_b$, with left and right children 
  assigned opposite curvature signs ($\sigma_1 = +1$, $\sigma_2 = -1$). All branches are analytic 
  curves; fractality emerges from recursive branching structure rather than local geometric 
  irregularity.}
  \label{fig:smooth_fractal_tree}
\end{figure}

\paragraph{Analytic branch modulation.}
Let $\rho(s)$ and $\kappa(s)$ be analytic functions, for example
\[
\rho(s) = \rho_0, \qquad \kappa(s) = \kappa_0 + \varepsilon \sin(\omega s),
\]
with $\varepsilon,\omega \in \mathbb{R}$. The resulting generator field produces 
smooth oscillatory curvature while preserving analyticity. Such oscillatory 
generators demonstrate that local geometric complexity can be introduced 
continuously within a branch, without recourse to discrete rewriting 
or loss of smoothness, and independently of the branching structure.

\paragraph{Controlled scaling and rotation.}
Global tapering or expansion is introduced by allowing $\rho$ to vary analytically, e.g.
\[
\rho(s) = \rho_0 \alpha^s, \qquad \alpha>0,
\]
with the understanding that continuity across branch events is enforced via 
reparameterization or state augmentation as discussed in 
Remark~\ref{rem:local-vs-global-progress}. Scaling laws of this form control 
branch length hierarchically. 

\subsection{Recursive branching with analytic generators}

The preceding sections define branching and generator inheritance at the level
of individual branch events. We now describe recursive application of these rulesto construct analytic fractal trees. This recursive
viewpoint is essential: fractality arises only through repeated application of
smooth generator evolution and branch inheritance, while local regularity is
preserved at every finite stage of the construction.

Let $X^{(0)}$ denote a root realization defined on a generator interval
$J^{(0)}=[0,S]$ with generator field $V^{(0)}$. A fractal tree is constructed by
recursively applying branch events at a prescribed sequence of generator values
\[
\{s_b^{(k)}\}_{k \ge 1},
\]
with each branch event spawning $m_k \ge 2$ child realizations.

At a branch event $s=s_b$, each child realization $X_i$ inherits the parent state
according to
\[
X_i(0) = X(s_b),
\]
and evolves under a child generator field $V_i$ obtained from the parent
generator by analytic modification. For planar generators of the form
\eqref{eq:constant_curvature}, a typical choice is
\begin{equation}
\label{eq:recursive_branching}
\rho_i(s) = \lambda_i \rho(s), \qquad
\kappa_i(s) = \sigma_i \kappa(s),
\end{equation}
where $\lambda_i \in (0,1)$ controls scale reduction and
$\sigma_i \in \{+1,-1\}$ selects turning direction.

The same branching rule is then applied to each child realization at subsequent
branch parameters, generating a rooted tree of generator trajectories. The
branching schedule and scaling factors determine the combinatorial structure of
the tree, while the generator fields determine the local geometry along each
branch.

\begin{proposition}[Finite-Stage Smoothness]
At any finite branching depth, the realized geometry of an analytic fractal tree
is a finite union of analytic curves. In particular, all branches are everywhere
smooth and free of singularities.
\end{proposition}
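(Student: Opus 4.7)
The plan is to prove the statement by induction on the branching depth $N$, leveraging the framework already established: Proposition~1 (Local Realizability) for analytic ODEs, analyticity of the inherited generator fields $V_i$ from \eqref{eq:generator_inheritance}, and the no-offset constraint that ensures each child starts from a well-defined state $X(s_b)$.

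For the base case ($N=0$), the root realization $X^{(0)}$ satisfies an analytic ODE on $J^{(0)} = [0,S]$ with analytic vector field $V^{(0)}$. By standard analytic ODE theory (an iterated application of Proposition~\ref{prop:local_realizability} combined with continuation), the solution is analytic on its maximal interval of existence; restricting to the compact subinterval $[0,S]$ gives an analytic curve, and composition with the fixed linear projection $\Pi$ yields an analytic realized curve $\gamma^{(0)} = \Pi \circ X^{(0)}$. For the inductive step, assume that all realizations at depth $\le N$ are analytic on their respective generator intervals. Consider a branch event at generator phase $s_b$ on some parent realization $X^{(p)}$, spawning children $X_i^{(c)}$ indexed by $i=1,\dots,m$. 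By the inductive hypothesis, $X^{(p)}(s_b)$ is well-defined; by \eqref{eq:state_inheritance}, the initial condition $X_i^{(c)}(0) = X^{(p)}(s_b)$ is exact. Since each child generator $V_i$ is obtained from the parent by an analytic reparameterization \eqref{eq:generator_inheritance} (or by analytic modification such as \eqref{eq:recursive_branching}), it remains analytic in both arguments, and Proposition~\ref{prop:local_realizability} together with analytic continuation on the compact interval $J_i$ yields an analytic child realization $X_i^{(c)}$.

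Composing with the projection $\Pi$, which is fixed and analytic (typically linear), each realized branch $\gamma_i^{(c)} = \Pi \circ X_i^{(c)}$ is analytic on its generator interval, hence smooth and free of interior singularities. At any finite depth $N$, the number of realized branches is bounded by $\prod_{k=1}^{N} m_k < \infty$, so the realized tree
\[
\mathcal{T}_N \;=\; \bigcup_{X \in \mathcal{N}_N} \Pi\bigl(X(J_X)\bigr)
\]
is a finite union of analytic curve segments. Each individual segment is analytic on its closed generator interval (using analyticity on the open $J$ and the fact that $J$ is open in $\mathbb{R}$, per the framework's generator domain assumption), so there are no interior singularities on any branch.

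The main obstacle I would expect is the technical issue of \emph{global existence} of each child trajectory on the full prescribed interval $J_i$, since Proposition~1 only guarantees existence on a neighborhood $(s_0-\varepsilon, s_0+\varepsilon)$. For general analytic vector fields, solutions can blow up in finite generator time. I would address this either by restricting to compact generator intervals on which $V_i$ and its derivatives are bounded (invoking standard extension theorems), or by noting that in the recursive branching templates of interest---where $\rho_i$ and $\kappa_i$ arise from \eqref{eq:recursive_branching} with bounded analytic $\rho,\kappa$ and bounded scaling factors $\lambda_i$---the state variables remain bounded on $J_i$, so the maximal interval of existence contains $J_i$. A secondary, minor subtlety is to clarify that ``free of singularities'' refers to regularity of each branch curve as an analytic map, not to the absence of joining points in the embedded image $\mathcal{T}_N$; branch points are continuous meeting loci of analytic segments, and (when the state carries orientation) tangent inheritance via the no-offset constraint prevents geometric kinks, but the proposition itself asserts only per-branch analyticity and finite-union structure.
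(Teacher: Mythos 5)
Your proof is correct and follows essentially the same route as the paper's: each branch is an analytic solution of an analytic ODE, only finitely many branch events occur at finite depth, and continuity at branchpoints follows from exact state inheritance. The one place you go beyond the paper is in flagging (and patching) the global-existence issue on the full interval $J_i$, which the paper's one-line appeal to ``standard ODE theory'' silently elides; that caveat is a genuine improvement rather than a different approach.
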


\begin{proof}
Each branch segment is the solution of an analytic ordinary differential equation
with analytic initial data. By standard ODE theory, each such solution is analytic
on its domain. Since only finitely many branch events occur at any finite depth,
the realized geometry is a finite union of analytic curves. Continuity at branch
points follows directly from the state inheritance rule.
\end{proof}

\subsection{Local regularity and global fractality}

Each individual branch generated by the constructions above is locally regular: curvature, tangent direction, and higher derivatives remain finite and well-defined at all finite generator values. There is no local metric irregularity or non-differentiability at any point along a branch.

Nevertheless, as the depth of recursive branching increases, the global geometry exhibits increasing structural complexity. The accumulation of infinitely many branch endpoints gives rise to a nontrivial limit set, while local smoothness is preserved at every finite stage. Fractality thus arises from the organization of branching events in generator space rather than from local geometric roughness.

This separation between local regularity and global complexity is a defining feature of analytic fractal trees and underlies the equivalence results established in the following section.

\section{Limit sets and canopy structure}

\subsection{Definition of the canopy set}
Let $\mathcal{T}$ denote an analytic fractal tree constructed by recursive branching as described in Section~4.  
Each branch realization is given by a generator trajectory
\[
X^{(i)} : J^{(i)} \to \mathbb{R}^n,
\]
with realized geometric image
\[
\gamma^{(i)}(s) = \Pi(X^{(i)}(s)) \subset \mathbb{R}^d.
\]
Let $E^{(i)} \subset \mathbb{R}^d$ denote the set of branch endpoints corresponding to the terminal parameter value of $J^{(i)}$.

\begin{definition}[Canopy set]
\label{def:canopy}
The canopy set $\mathcal{C} \subset \mathbb{R}^d$ of an analytic fractal tree 
is defined as
\[
\mathcal{C} = \overline{\bigcup_{i} E^{(i)}},
\]
where the closure is taken in $\mathbb{R}^d$.
\end{definition}

The canopy set captures the asymptotic geometry of the tree independently of any finite truncation depth. When the branching scales satisfy a uniform contraction condition, $\mathcal{C}$ is compact. This notion is directly analogous to the limit set (attractor) associated with discrete fractal constructions.

\subsection{Induced discrete scaffold}

At any finite branching depth, the analytic generator construction induces a discrete rooted tree whose 
nodes are precisely the branchpoints produced by state inheritance. Parent--child relations are determined 
by the branching schedule in generator space.

\begin{figure}[htbp]
  \centering
  \includegraphics[width=\textwidth]{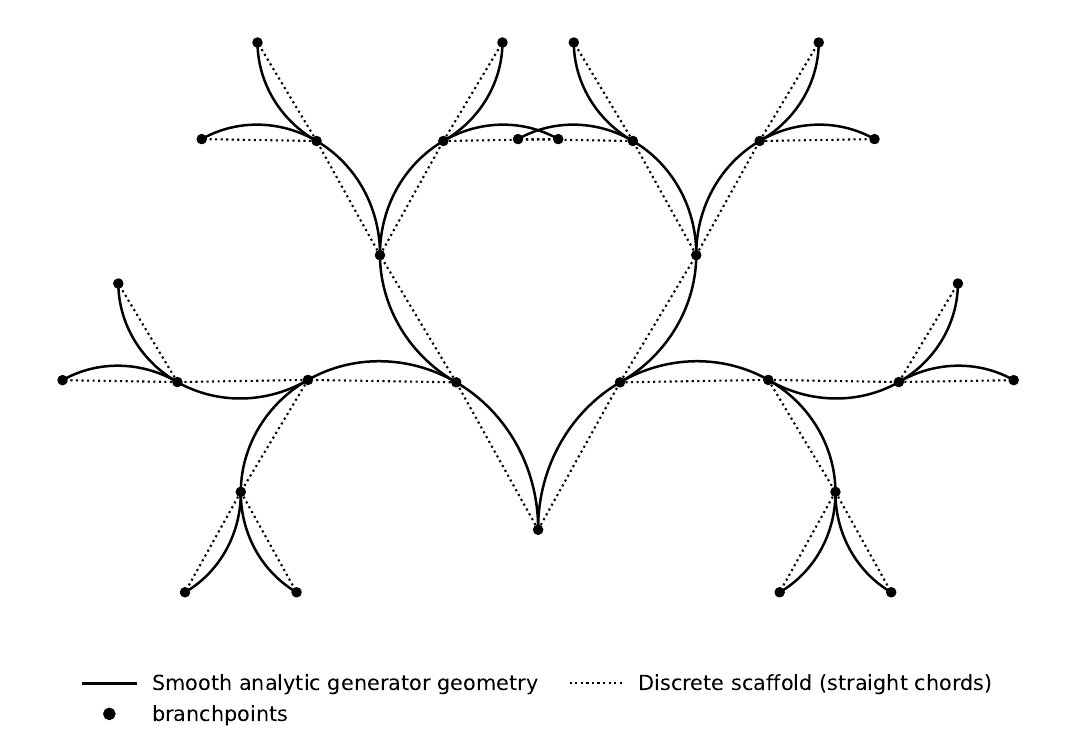}
  \caption{Analytic generator tree (solid curves) and its induced discrete
  scaffold (dotted segments). Branchpoints (filled circles) are realized 
  directly by the generator construction. The discrete scaffold connects 
  these branchpoints with straight chords, preserving combinatorial 
  structure and endpoint locations.}
  \label{fig:scaffold_interpolation}
\end{figure}

\begin{proposition}[Induced discrete scaffold]
At each finite branching depth, the branchpoints of an analytic fractal tree
define a discrete rooted tree whose nodes and parent--child relations coincide
with those of the discrete fractal construction determined by the same branching
multiplicities, contraction ratios, and orientation rules.

In particular, the induced discrete scaffold is combinatorially identical
(i.e. isomorphic as rooted tree) to the corresponding discrete construction,
although the geometric realizations of the edges differ.
\end{proposition}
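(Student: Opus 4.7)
The plan is to prove the claim by induction on branching depth $k$, constructing at each stage an explicit rooted-tree isomorphism $\Phi_k$ between the generator tree truncated at depth $k$ and the discrete tree truncated at depth $k$. The key observation is that both constructions are determined by a common schedule of branching data: at each depth $j$, a multiplicity $m_j$ and, for each emerging child, a contraction ratio $\lambda_i$ together with an orientation label $\sigma_i$. Since by hypothesis these parameters are the same in the two constructions, the task reduces to matching child lists at every branch event and verifying that parent--child relations are preserved.

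For the base case $k=0$, each tree consists of a single root; set $\Phi_0$ to identify the root realization $X^{(0)}$ with the root of the discrete tree. For the inductive step, assume $\Phi_k$ has been defined and is a bijection on depth-$k$ node sets that respects parent--child relations. Let $X$ be a depth-$k$ generator branch, with $\Phi_k(X)=v$. A branch event at the terminal generator parameter of $X$ spawns $m_{k+1}$ children, each labeled by parameters $(\lambda_i,\sigma_i)$; the discrete construction applied to $v$ likewise yields $m_{k+1}$ children indexed by the same parameter list. Extend $\Phi_k$ to $\Phi_{k+1}$ by sending the $i$-th generator child of $X$ to the $i$-th discrete child of $v$. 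The extension is a bijection on the depth-$(k+1)$ node set since child lists have equal length, and it preserves parent--child relations by construction, completing the induction. Passing to the union (or stopping at any fixed finite depth) gives the claimed rooted-tree isomorphism.

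The final clause of the proposition, that geometric realizations of the edges differ, imposes no additional proof obligation: the isomorphism $\Phi_k$ is purely combinatorial. One only needs to record that in the generator tree an edge from parent $X$ to child $X_i$ is realized as the analytic curve $\Pi(X_i(J_i))$, whereas in the discrete scaffold the corresponding edge is the straight chord between the branchpoint locations $\Pi(X(s_b))$ and $\Pi(X_i(\text{end}))$. These two realizations share endpoints, by the state-inheritance rule \eqref{eq:state_inheritance} applied together with Proposition~3, but are otherwise unrelated geometric objects.

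The only point that warrants care is the well-definedness of child indexing: one must fix, once and for all, an ordering on the children emerging at each branch event that is consistent between the two constructions. Under the hypothesis ``same branching multiplicities, contraction ratios, and orientation rules,'' this ordering is part of the shared data, so no genuine obstacle arises. The proposition is essentially a coherence statement that records the fact that the generator tree is designed to reproduce the discrete combinatorial tree at every finite depth; the proof is a direct unpacking of the recursive construction of Section~4.2.
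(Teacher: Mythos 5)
Your proof is correct and takes essentially the same approach as the paper: both rest on the observation that the combinatorial tree is determined solely by the shared branching schedule (multiplicities, ratios, orientation labels), independently of the geometric realization of the edges. The paper states this in three sentences; your induction on depth with the explicit isomorphism $\Phi_k$ and the remark about fixing a consistent child ordering is simply a more careful unpacking of the same argument.
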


\begin{proof}
By construction, each branchpoint corresponds to a branch event at which state 
inheritance occurs. The parent-child relations are determined solely by the 
branching schedule, independent of geometric realization. Since the branching 
multiplicities, contraction ratios, and orientation rules match those of the 
discrete construction, the induced graph structure is isomorphic as a rooted tree.
\end{proof}

Each analytic generator segment provides a smooth interpolation between adjacent 
scaffold nodes. Importantly, the generator dynamics do not alter the locations of 
scaffold nodes; they refine only the geometry along the connecting edges. This 
relationship is illustrated in Figure~\ref{fig:scaffold_interpolation}, where the 
discrete scaffold is shown together with its analytic interpolation.

\subsection{Universality and Compilation}\label{sec:universality}

Classical discrete approaches to tree-like fractals —-- most notably iterated 
function systems and grammar-based formalisms such as L-systems —-- generate 
geometry through repeated application of contraction maps or symbolic 
transformations \cite{hutchinson1981,falconer2003fractal,prusinkiewicz1990}. 
In these frameworks, branching structure and geometric realization are specified 
simultaneously: rewriting rules or contraction maps prescribe both the 
combinatorial growth of the tree and the geometry of its branches.

Analytic fractal trees decouple these roles. Branching structure is specified
entirely in generator space through an explicit schedule of branch events and
exact state inheritance, while geometric realization arises only through
integration of smooth generator fields. As a result, each branch segment is 
everywhere analytic at all finite stages of construction, yet the induced 
discrete scaffold reproduces the full combinatorial structure of the 
corresponding discrete construction.

\begin{theorem}[Combinatorial universality of analytic generator trees]
\label{thm:combinatorial_universality}
Let $D$ be any discrete tree-based fractal specification --— such as an
IFS-generated tree or a turtle-interpreted L-system —-- which determines a rooted
tree together with per-branch parameters (e.g.\ contraction ratios and
orientation rules). Then there exists an analytic fractal tree $T$ whose induced
discrete scaffold is combinatorially identical (isomorphic as a rooted tree) to
the discrete construction of $D$ at every finite depth.

Moreover, if the discrete construction of $D$ admits an embedded realization of
its nodes in $\mathbb{R}^d$, the analytic generator tree $T$ may be chosen so that
its branchpoints coincide with these node locations at every finite stage.
\end{theorem}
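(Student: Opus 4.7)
The plan is to extract from $D$ the combinatorial data (rooted tree, per-node branching multiplicity $m_v$) together with its per-child parameters $(\lambda_{v,i},\sigma_{v,i})$, and to construct $T$ by declaring branch events in generator space that mirror these data exactly. First I would fix, for the root and for each internal node, a generator parameter value to serve as a branch event location. Starting from a root realization on a template generator such as \eqref{eq:planar_generator} with canonical analytic $(\rho,\kappa)$, I would proceed by induction on tree depth: at each branch event, apply the state inheritance rule \eqref{eq:state_inheritance} to spawn $m_v$ children, and define each child generator via the inheritance rule \eqref{eq:generator_inheritance} with the per-child modifications of the form \eqref{eq:recursive_branching} carrying the prescribed $(\lambda_{v,i},\sigma_{v,i})$. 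This defines $T$ together with a natural bijection from the nodes of $D$ to the branchpoints of $T$.

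To establish the combinatorial identity, I would apply the induced-scaffold proposition of Section~5.2 to $T$ at each finite depth $n$: the scaffold nodes are exactly the branchpoints created by events up to depth $n$, and the scaffold parent--child relation is the branch-event parentage. By construction this relation agrees with $D$ truncated at depth $n$, with matching multiplicities and preserved parameter labels, giving the desired rooted-tree isomorphism; since $n$ is arbitrary, the isomorphism holds at every finite depth.

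For the embedded case, suppose $D$ comes with node locations $\{p_v\}\subset\mathbb{R}^d$. I need to choose the generator fields so that integration along each branch lands exactly at its target node. In the planar case the inherited state at a branch event is $(p_v,\theta_v)$ with $\theta_v$ already fixed by the parent flow, and for each child $v_i$ with target $p_{v_i}$ I would solve the two-point problem of finding analytic $\rho_i,\kappa_i$ on the branch interval $J_{v,i}$ such that \eqref{eq:planar_generator} flows $(p_v,\theta_v)$ to $p_{v_i}$. A uniformly applicable choice is the unique circular arc through $p_v$ with initial tangent $\theta_v$ that passes through $p_{v_i}$: this prescribes a constant $\kappa_i = 1/R$ and a constant $\rho_i$ equal to arc length divided by $|J_{v,i}|$, with the straight-line limit $\kappa_i = 0$ covering the collinear degenerate case. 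For $d>2$, enlarge the state space to carry an orthonormal frame and perform the same arc construction inside the plane spanned by $\theta_v$ and $p_{v_i}-p_v$. Each such branch is analytic, respects the no-offset constraint exactly, and terminates at $p_{v_i}$.

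The main obstacle is precisely this embedded step: the no-offset constraint forces the initial tangent of each child to be inherited from the parent, so targets cannot be reached by applying positional offsets but only by bending each branch through its curvature profile. The arc-based construction resolves this uniformly under the mild genericity provided by distinct node locations, with the straight-line fallback handling collinearity. Once this geometric realization is in place, the combinatorial part is essentially tautological by construction, and the Finite-Stage Smoothness proposition of Section~4.2 guarantees that at every finite depth $T$ is a finite union of analytic branches with branchpoints at the prescribed node positions, completing the proof.
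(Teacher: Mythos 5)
Your overall architecture matches the paper's: compile the discrete tree node-for-node into a schedule of branch events, observe that the rooted-tree isomorphism is then tautological, and reduce the embedded-node claim to a two-point realization problem along each edge. Where you genuinely diverge is in how that two-point problem is solved. The paper proves a curve-to-generator lemma: for each edge it picks an \emph{arbitrary} analytic regular curve $\gamma_e$ joining $p(u)$ to $p(v)$ and reverse-engineers the generator data $\rho=\|\gamma_e'\|$, $\theta=\arg\gamma_e'$, $\kappa=\theta'$ from it. You instead construct the generator data forward: a constant-curvature arc from $p_v$ with initial tangent $\theta_v$ through the target $p_{v_i}$, with a frame-augmented state for $d>2$. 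Your route is in one respect \emph{more} careful than the paper's: you explicitly enforce that the child's initial tangent is the inherited $\theta_v$, which is exactly what the no-offset constraint demands, whereas the paper's construction as written allows the chosen $\gamma_e$ to have an initial tangent differing from the parent's terminal direction, so that ``state inheritance'' at the angular component is left implicit. The price of your rigidity is a degenerate case your recipe does not cover: if $p_{v_i}$ lies on the \emph{backward} tangent ray from $p_v$ (i.e.\ $p_{v_i}-p_v$ is antiparallel to $(\cos\theta_v,\sin\theta_v)$), no circle tangent to that direction at $p_v$ passes through $p_{v_i}$, and the straight-line fallback only handles the forward-collinear case. Distinctness of node locations does not exclude this configuration, and $\theta_v$ is not free since it is determined by the parent arc. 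This is fixable without changing your strategy --- replace the constant $\kappa_i$ by a nonconstant analytic curvature profile (e.g.\ polynomial) solving the same tangent-constrained two-point problem, for which solutions exist in abundance --- but as stated the uniform arc recipe has a hole. The combinatorial half and the appeal to finite-stage smoothness are fine and essentially identical to the paper's argument.
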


The proof of Theorem~1 is constructive and follows directly from the
definitions of analytic generator trees and discrete tree specifications.
The key observation is that the branching structure and per-branch
parameters of $D$ determine a rooted tree together with an associated
branching schedule and generator inheritance rules. An analytic generator
tree is then constructed by implementing these branching events via exact
state inheritance and by defining, on each branch, generator fields that
realize the prescribed per-branch parameters. The resulting analytic
fractal tree $T$ induces a discrete scaffold that is combinatorially
identical to the discrete construction of $D$ at every finite depth.
The full proof is given in Appendix~A.

\paragraph{Compilation into analytic generators.}
Theorem~1 is constructive in the sense of existence. Given a discrete tree
specification $D$, one may extract its rooted tree structure and, when
available, its node embedding, and realize each discrete edge by an analytic
curve segment connecting the corresponding parent and child nodes. These
segments define analytic generator data along each branch, while branching is
implemented by exact state inheritance at branchpoints. Iteration over the
discrete tree yields an analytic generator tree whose induced scaffold matches
that of $D$ exactly, while leaving the local branch geometry unconstrained
beyond smoothness.

The correspondence established by Theorem~1 does not privilege a particular
direction of construction. An analytic generator tree may equally well be
taken as primitive, with its induced discrete scaffold recovering the same
rooted tree structure and per-branch similarity data. The following example
illustrates this correspondence in the reverse direction, starting from an
explicit analytic generator construction.

\paragraph{Example 1 (Compilation of a simple binary discrete tree).}
We illustrate the compilation procedure on a minimal discrete tree-based
fractal: a symmetric binary tree with uniform contraction and fixed turning
angles. The purpose of the example is not to introduce new geometry, but to make
explicit how a classical discrete specification is translated into analytic
generator data within the present framework.

\emph{Discrete specification.}
Consider a rooted binary tree in the plane generated by two contractive maps
\[
F_\pm(x) = \lambda R(\pm\theta)\,x + t,
\]
where $0<\lambda<1$, $R(\pm\theta)$ denotes planar rotation by angles $\pm\theta$,
and $t\in\mathbb{R}^2$ is a fixed translation. Iteration of these maps produces a
discrete tree whose nodes are the images of the root under all finite
compositions of $F_+$ and $F_-$.

\begin{figure}[t]
\centering
\includegraphics[width=\linewidth]{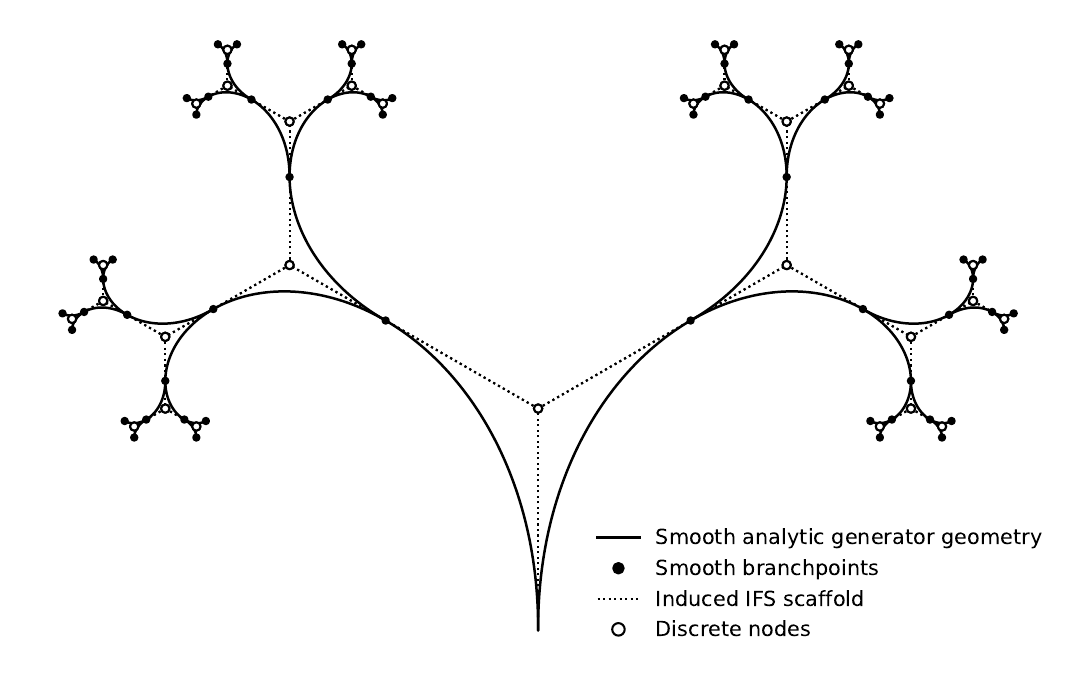}
\caption{Geometric extraction of discrete IFS parameters from a smooth
analytic generator tree. Scaffold nodes (open circles) are obtained by
tangent intersection rather than direct construction. The recovered
contraction ratios $\hat{\lambda}_a = A$ and turning angles $|\hat{\theta}_a| = \omega$ confirm that
discrete similarity data emerges from smooth generator geometry, illustrating 
recovery of discrete similarity data, consistent with Theorem~2.}
\label{fig:smooth-vs-discrete}
\end{figure}

This specification determines (i) a rooted binary tree with branching
multiplicity $m=2$, (ii) a per-branch contraction ratio $\lambda$, and (iii)
per-branch orientation labels $\sigma\in\{+,-\}$. An explicit embedding of the
discrete nodes is obtained by evaluating the compositions of the maps $F_\pm$.

\emph{Analytic compilation.}
We construct an analytic generator tree whose induced discrete scaffold
coincides with this discrete specification.

We work in the canonical planar state space
\[
X(s)=(x(s),y(s),\theta(s))\in\mathbb{R}^3,
\]
with realization map $\Pi(x,y,\theta)=(x,y)$.

Let the root branch be generated on an interval $[0,S]$ by a constant-curvature
generator
\[
\frac{dx}{ds}=\rho_0\cos\theta,\qquad
\frac{dy}{ds}=\rho_0\sin\theta,\qquad
\frac{d\theta}{ds}=0,
\]
so that the realized root branch is a straight segment of length $\rho_0 S$.

At the branch parameter $s=S$, a binary branch event is declared. Two child
realizations $X_+$ and $X_-$ are spawned with exact state inheritance
\[
X_\pm(0)=X(S),
\]
and evolve under modified generator data
\[
\rho_\pm(s)=\lambda\rho_0,\qquad
\kappa_\pm(s)=\pm\kappa_0,
\]
where the constant curvature magnitude $\kappa_0$ is chosen so that the total
turning angle over each child branch equals $\pm\theta$.

This procedure is repeated recursively at the same generator phase on each
branch, yielding a rooted binary generator tree. At every branch event, no
geometric offset is introduced: all child branches inherit position and
orientation exactly, and divergence arises solely through subsequent generator
evolution.

\emph{Correspondence with the discrete tree.}
At each finite depth $k$, the branchpoints of the analytic construction induce a
discrete scaffold obtained by intersecting tangent extensions of parent and
child branches. Parent–child relations of these scaffold nodes are identical to
those of the discrete tree generated by the maps $F_\pm$, and the induced
discrete scaffold is therefore isomorphic to the discrete specification at every
finite stage.

The analytic generator segments provide smooth interpolations between adjacent
scaffold nodes. Their detailed geometry is not fixed by the discrete
specification and may be chosen freely (subject to analyticity), without
affecting the combinatorial structure or the similarity data of the scaffold.

\emph{Worked instantiation and parameter recovery.}
In a concrete instantiation of this construction, branch realizations are
generated by integrating
\[
\frac{dx}{ds}=A^{s}\cos(\theta_0+\omega s),\qquad
\frac{dy}{ds}=A^{s}\sin(\theta_0+\omega s),\qquad
\frac{d\theta}{ds}=\omega,
\]
with fixed segment length $S=1$, contraction parameter $0<A<1$, and angular rate
$\omega>0$. At each branch event, two children are spawned by exact state
inheritance together with the sign choice $\omega\mapsto\pm\omega$.

From the resulting smooth realization, a discrete scaffold is extracted
projectively by intersecting tangent lines at successive branchpoints. For each
generation $g\ge2$, the measured ratios of consecutive scaffold edge lengths and
the turning angles between parent and child edges recover
\[
\widehat{\lambda}_g=A,\qquad |\widehat{\theta}_g|=\omega,
\]
uniformly across all branches, confirming that the similarity data of the
discrete specification emerge directly from the smooth generator geometry.

\emph{Limit geometry.}
Under the contraction hypothesis $0<\lambda<1$, the accumulation set of analytic
branch endpoints (the canopy set) coincides with the attractor of the discrete
system $\{F_\pm\}$. The smooth generator geometry therefore realizes the same
limit set as the discrete construction, while differing locally in its
interpolating curves. This example concretely illustrates the compilation
asserted by Theorem~1 and the canopy set equivalence established in
Theorem~2.

\begin{theorem}[Canopy set equivalence]
\label{thm:canopy_equivalence}
\label{thm:canopy_equivalence}
Under matching branching structure and contraction ratios, the canopy set
$\mathcal{C}$ of the analytic fractal tree $\mathcal{T}$ coincides with the
attractor $\mathcal{A}$ of the corresponding discrete fractal construction:
\[
\mathcal{C} = \mathcal{A}.
\]
\end{theorem}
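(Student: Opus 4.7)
The plan is to combine the combinatorial universality established in Theorem~\ref{thm:combinatorial_universality} with the Hutchinson--Barnsley characterization of the IFS attractor. Under the contraction hypothesis, the discrete construction admits a unique non-empty compact attractor $\mathcal{A}$ that is the Hausdorff limit of its depth-$k$ node sets. The theorem then reduces to identifying the closure of the canopy's endpoint set with this same Hausdorff limit.

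Concretely, I would proceed as follows. First, fix a root node $x_0$ and per-branch contractions $\lambda_i\in(0,1)$, uniformly bounded away from $1$, and let $V_k\subset\mathbb{R}^d$ denote the set of depth-$k$ discrete nodes. A standard Cauchy argument on finite compositions shows that $V_k$ converges to $\mathcal{A}$ in the Hausdorff metric at geometric rate, giving
\[
\mathcal{A}=\overline{\bigcup_{k\ge 0} V_k}.
\]
Next, apply Theorem~\ref{thm:combinatorial_universality} in its embedded form to choose $\mathcal{T}$ whose branchpoints coincide with the discrete nodes at every finite depth. Because the state inheritance rule~\eqref{eq:state_inheritance} identifies the terminal endpoint of each depth-$k$ branch with the starting branchpoint of its depth-$(k+1)$ children, the terminal-endpoint union $\bigcup_i E^{(i)}$ of the completed infinite tree equals $\bigcup_{k\ge 1} V_k$. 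Taking closures and combining yields $\mathcal{C}=\mathcal{A}$.

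I expect the main obstacle to be a clean treatment of accumulation along the interpolating segments, together with a careful statement of what the index $i$ in Definition~\ref{def:canopy} ranges over. The canopy is defined using only endpoints, so one must verify that every limit point of $\mathcal{A}$ is actually approximated by finite-depth endpoints; this is exactly what the Hausdorff convergence $V_k\to\mathcal{A}$ delivers. Conversely, since the compiled generator data in Theorem~\ref{thm:combinatorial_universality} may be chosen with arclengths uniformly comparable to the corresponding scaffold-chord lengths, the depth-$k$ branch diameters are bounded by $O(c^k)$ with $c=\max_i\lambda_i<1$, so interior points of analytic branches cluster only at endpoints and no spurious limit points enter. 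A secondary bookkeeping point is to confirm that $i$ ranges over the full recursively constructed tree, so that every branchpoint at every depth is realized as some $E^{(i)}$; this is implicit in the recursive construction of Section~4.
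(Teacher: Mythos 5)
Your proposal follows essentially the same route as the paper's own proof: identify the depth-$k$ analytic endpoint sets with the depth-$k$ discrete node sets, then invoke Hausdorff convergence of those node sets to the Hutchinson attractor. The paper performs the identification via its ``induced discrete scaffold'' lemma in Appendix~B, while you route it through the embedded form of Theorem~\ref{thm:combinatorial_universality}; these are the same content, since the embedded-node clause of that theorem is exactly the statement that branchpoints coincide with discrete node locations at every depth. Your closing remarks about interior points of analytic branches and about the range of the index $i$ in Definition~\ref{def:canopy} correspond to the paper's one-line observation that smooth interpolation does not move branchpoints or endpoints.

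One step deserves a concrete caveat, although the paper glosses over the same point. The identity $\mathcal{A}=\overline{\bigcup_{k\ge 0} V_k}$ does \emph{not} follow from Hausdorff convergence $V_k\to\mathcal{A}$ alone: the closure of the union contains every finite-depth node, and for a generic root $x_0\notin\mathcal{A}$ (e.g.\ the trunk base and first branchpoints of a symmetric binary tree, which lie well below the canopy) one has $\overline{\bigcup_k V_k}=\mathcal{A}\cup\bigcup_k V_k\supsetneq\mathcal{A}$. The equality you want holds either (a) under the additional normalization $x_0\in\mathcal{A}$, so that invariance $F_i(\mathcal{A})\subset\mathcal{A}$ forces $V_k\subset\mathcal{A}$ for all $k$, or (b) if the canopy is read as the set of accumulation points of $\bigcup_k E_k$ (as the paper's Appendix~B restates it) rather than the closure of the union (as Definition~\ref{def:canopy} literally says). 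You should state which reading you are using; otherwise the final ``taking closures'' step does not close the argument.
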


\begin{proof}[Proof.]
At each finite branching depth, the analytic generator construction induces a
discrete scaffold whose nodes and parent--child relations coincide with those of
the corresponding discrete system. Consequently, the endpoint sets at each depth
are identical in the analytic and discrete constructions. Under uniform
contraction assumptions, these finite-depth endpoint sets converge in the
Hausdorff metric to a unique compact limit, namely the discrete attractor
$\mathcal{A}$; this convergence follows standard arguments from fractal geometry
\cite{falconer2003fractal,hutchinson1981}. The canopy set $\mathcal{C}$, defined as
the accumulation set of analytic branch endpoints, therefore coincides with
$\mathcal{A}$. A complete proof is given in Appendix~B.

\end{proof}

\subsection{Compilation preserves attractors}
\label{sec:compilation-preserves-attractors}

These results show analytic compilation preserves both finite 
branching structure and asymptotic limit geometry.

\begin{corollary}[Compilation preserves attractors]
\label{cor:universality_implies_canopy_equivalence}
Let $D$ be a discrete tree-based fractal specification governed by a finite family
of contractive maps $\{F_i\}_{i=1}^m$ with
$\max_i \mathrm{Lip}(F_i) < 1$, and let $\mathcal{A}$ denote its attractor. Then
there exists an analytic fractal tree $\mathcal{T}$ obtained by compiling $D$
into analytic generators such that the canopy set $\mathcal{C}$ of $\mathcal{T}$
satisfies
\[
\mathcal{C} = \mathcal{A}.
\]
\end{corollary}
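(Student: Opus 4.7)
The plan is to obtain this as a direct combination of the two main theorems, with the contraction hypothesis supplying the premise needed by Theorem~\ref{thm:canopy_equivalence}. First, I would invoke Theorem~\ref{thm:combinatorial_universality} to extract from $D$ its rooted tree structure together with the per-branch similarity data encoded by the maps $\{F_i\}_{i=1}^m$ (contraction ratios $\lambda_i = \mathrm{Lip}(F_i)$, orientation labels, and, via the embedded orbit of the root, node locations in $\mathbb{R}^d$). Theorem~\ref{thm:combinatorial_universality} then produces an analytic fractal tree $\mathcal{T}$ whose induced discrete scaffold is isomorphic to the discrete construction of $D$ at every finite depth, and whose branchpoints can be aligned with the discrete node embedding.

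Next I would verify that $\mathcal{T}$ satisfies the hypotheses of Theorem~\ref{thm:canopy_equivalence}. The branching multiplicities and parent--child relations agree by the isomorphism of scaffolds, and the per-branch contraction ratios carry over from $\{F_i\}$ to the generator inheritance data $\lambda_i$ used in the compilation step (cf.\ equation~\eqref{eq:recursive_branching}). The uniform contraction condition $\max_i \mathrm{Lip}(F_i) < 1$ ensures the standard contractive hypothesis of Theorem~\ref{thm:canopy_equivalence} is met, so the theorem applies and yields $\mathcal{C} = \mathcal{A}$.

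Finally I would remark that no new analytic machinery is required: the corollary is a compatibility statement asserting that the compilation map (Theorem~\ref{thm:combinatorial_universality}) commutes with passage to the limit set (Theorem~\ref{thm:canopy_equivalence}). The only subtlety, and the point I would treat with a little care, is to confirm that the isomorphism of scaffolds produced by Theorem~\ref{thm:combinatorial_universality} is exactly the matching of branching structure and contraction ratios required as the hypothesis of Theorem~\ref{thm:canopy_equivalence}; this amounts to checking that the per-branch parameters preserved by compilation are precisely the similarity data that determine the discrete attractor $\mathcal{A}$ via the Hutchinson operator. I do not anticipate a genuine obstacle here, since both theorems are stated in terms of the same combinatorial and contraction data; the corollary is essentially their conjunction.
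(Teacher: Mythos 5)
Your proposal is correct and follows essentially the same route as the paper's proof: invoke Theorem~\ref{thm:combinatorial_universality} to compile $D$ into an analytic generator tree with isomorphic scaffold, check that the matching branching structure and contraction ratios satisfy the hypotheses of Theorem~\ref{thm:canopy_equivalence}, and conclude $\mathcal{C}=\mathcal{A}$. The extra care you take in verifying that the preserved per-branch parameters are exactly the similarity data feeding the Hutchinson operator is a reasonable elaboration of the same two-step argument.
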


\begin{proof}
By Theorem~\ref{thm:combinatorial_universality}, the discrete tree of $D$ can be
compiled into an analytic generator tree $\mathcal{T}$ whose induced scaffold is
combinatorially identical at every finite depth. The hypotheses of
Theorem~\ref{thm:canopy_equivalence} therefore apply, and the canopy set of
$\mathcal{T}$ coincides with the discrete attractor $\mathcal{A}$.
\end{proof}

\section{Implications and Interpretation}

\subsection{Smoothness, fractality, and organizational structure}

Classical fractal constructions are commonly associated with local geometric
irregularity or non-differentiability, features that arise naturally from
discrete rewriting systems or piecewise affine attachment rules. In such
frameworks, fractality is often interpreted as a consequence of local metric
roughness.

The analytic generator constructions developed here show that this association
is not fundamental. At every finite stage, an analytic fractal tree consists of
a finite union of smooth curves generated by analytic ordinary differential
equations. Curvature and tangent direction remain well defined along each
branch, and continuity at branch events is enforced by exact state inheritance
in generator space.

Despite this local regularity, the global limit geometry is fractal in the
classical sense. The accumulation of infinitely many branch endpoints gives rise
to a nontrivial canopy set which, under standard contraction assumptions,
coincides with the attractor of a corresponding discrete construction.
Fractality therefore arises not from local geometric roughness, but from the
recursive organization of branching and scaling.

This phenomenon is consistent with other settings in which fractal limit sets
emerge from smooth dynamics, notably in hyperbolic dynamical systems, where
invariant sets may be geometrically fractal despite smooth governing equations
\cite{katok1995}. Related analytic approaches study differential operators and energy
forms defined directly on fractal limit sets, as in the theory of analysis on
self-similar spaces \cite{kigami2001}. The present framework adopts a complementary
perspective: analytic structure precedes the fractal geometry, which emerges
asymptotically from smooth generator-driven evolution rather than serving as its
domain of definition.

\subsection{Generator-first geometry}

The analytic generator framework adopts a generator-first viewpoint in which
geometry is not specified directly but realized through the integration of
generator fields. Generator dynamics in state space constitute the primary
causal mechanism, while geometric curves arise only as derived objects through
projection.

Within this formulation, branching is implemented through exact inheritance of
generator state rather than through geometric attachment or symbolic rewriting.
As a consequence, continuity and regularity of realized geometry follow directly
from the analytic structure of the generators, without the need for auxiliary
compatibility conditions at branchpoints.

This separates generative causation from geometric realization. Local geometric
properties reflect analytic features of the generator field, while global
structure is determined by the organization of branch events in generator
space. The framework therefore shifts attention from prescribed form to
generative dynamics, with branching acting as a structural operator on generator
trajectories rather than as a geometric splice operation.

\subsection{Contribution and scope}

The contribution of this work is representational and structural rather than
classificatory. No new fractal attractors are introduced, and no new convergence
phenomena are claimed beyond those established in classical discrete theory.
Instead, the results demonstrate that tree-based fractal constructions admit a
fully analytic, generator-driven representation that preserves both finite
branching structure and asymptotic limit geometry.

By treating analytic generator dynamics as the primary mechanism of
construction, the framework places classical fractal trees within the domain of
smooth analysis without altering their global structure. This makes available
differential and dynamical methods while remaining faithful to established
discrete formulations.

The scope of the present work is deliberately restricted. Generator domains are
one-dimensional, realizations are confined to curve embeddings, and generator
fields are prescribed rather than coupled to external environments or feedback.
These limitations isolate analytic generators as a foundational primitive in
the simplest nontrivial setting and avoid introducing additional structural
assumptions that would obscure the core correspondence results.

\section{Conclusion}

This paper developed a generator-driven framework for the construction of
tree-based fractals in which geometry is realized through the integration of
analytic generator dynamics rather than prescribed through discrete attachment
or symbolic rewriting. Branching is treated as a primitive operation implemented
via exact state inheritance in generator space, ensuring that all finite-depth
realizations consist of smooth curve segments without branchpoint singularities.

Two central results establish a precise correspondence between analytic generator
trees and classical discrete constructions. A combinatorial universality theorem
shows that any discrete tree-based fractal specification can be compiled into an
analytic generator tree whose induced discrete scaffold is isomorphic at every
finite depth. Under standard contractive assumptions, a canopy set equivalence
theorem then shows that the accumulation set of analytic branch endpoints
coincides with the attractor of the corresponding discrete construction.

Together, these results demonstrate that global fractal structure is determined
by recursive branching and scaling independently of the local regularity of
individual branches. Analytic generators therefore provide a smooth compilation
target for classical discrete specifications, preserving both finite
combinatorics and asymptotic limit geometry, and placing tree-based fractals
within a smooth dynamical systems setting.

An important consequence of this correspondence is that any tree-based
fractal admitting a classical discrete specification can be realized as the
limit of a smooth generator-driven dynamical process. While the results of
this paper are purely structural, this realization embeds discrete fractal
trees into a setting where tools from ordinary differential equations and
dynamical systems are, in principle, applicable. In particular, questions
of perturbation, stability under generator modification, and interaction
with external fields may be formulated at the level of generator dynamics
without altering the underlying combinatorial or asymptotic structure.

Although the present work is confined to one-dimensional generator domains and
curve realizations, the framework itself is not inherently restricted to this
setting. The decoupling of internal generator state from spatial projection
suggests natural extensions to reactive generators, in which the field is
modulated by an embedding environment, and to higher-dimensional generator
domains, where state inheritance may support smoothly branching surfaces and
manifolds. These directions are left for future work.

\appendix

\section{Proof of theorem~\ref{thm:combinatorial_universality}}

We first make explicit the minimal data used from a discrete tree-based fractal
specification.

Theorem~\ref{thm:combinatorial_universality} asserts existence of an analytic
generator tree whose induced discrete scaffold is combinatorially identical to
$T_D$ at every finite depth, and (when an embedding is supplied) whose branchpoints
coincide with the embedded node locations.

\subsection{A curve-to-generator lemma (planar case)}

We work in the canonical planar state space
$X=(x,y,\theta)\in\mathbb{R}^3$ with realization map $\Pi(x,y,\theta)=(x,y)$.

\begin{lemma}[Analytic curve induces analytic generator data]
\label{lem:curve_to_generator}
Let $\gamma:[0,S]\to\mathbb{R}^2$ be an analytic regular curve, i.e.\ $\gamma$ is
real-analytic and $\|\gamma'(s)\|>0$ for all $s\in[0,S]$. Define
\[
\rho(s) \;=\; \|\gamma'(s)\|,\qquad
\theta(s) \;=\; \arg(\gamma'(s)),\qquad
\kappa(s) \;=\; \theta'(s).
\]
Then $\rho$ and $\kappa$ are analytic on $[0,S]$, and the solution of
\[
x'=\rho\cos\theta,\quad y'=\rho\sin\theta,\quad \theta'=\kappa,
\qquad (x(0),y(0))=\gamma(0),
\]
satisfies $(x(s),y(s))=\gamma(s)$ for all $s\in[0,S]$.
\end{lemma}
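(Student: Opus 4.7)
\medskip
\noindent\emph{Plan of proof.} The argument splits into three steps: establishing analyticity of $\rho$, constructing an analytic lift $\theta$ of the unit-tangent angle and checking analyticity of $\kappa$, and finally verifying that the resulting ODE is solved by $\gamma$ itself via uniqueness.

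\medskip
First, I would handle $\rho$. Since $\gamma$ is analytic, so are its components $x(s),y(s)$ and their derivatives, and hence $\|\gamma'(s)\|^2 = x'(s)^2+y'(s)^2$ is a real-analytic function on $[0,S]$. By the regularity assumption this quantity is strictly positive, so the analytic square root $\rho(s)=\sqrt{x'(s)^2+y'(s)^2}$ is well defined and analytic on $[0,S]$ (the square root is analytic on $(0,\infty)$, and composition of analytic functions is analytic).

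\medskip
The only genuine subtlety is the construction of $\theta$. The map $s\mapsto\gamma'(s)/\|\gamma'(s)\|$ takes values in $S^1$, so \emph{pointwise} $\arg$ is only defined modulo $2\pi$ and a priori has a branch ambiguity. The approach I would take avoids branch choices by defining $\theta$ through its derivative. Setting
\[
\theta'(s) \;=\; \frac{x'(s)\,y''(s)-y'(s)\,x''(s)}{x'(s)^2+y'(s)^2},
\]
the right-hand side is analytic on $[0,S]$ because numerator and denominator are analytic and the denominator is nonvanishing. Fixing $\theta(0)\in\mathbb{R}$ with $e^{i\theta(0)}=\gamma'(0)/\|\gamma'(0)\|$ and integrating gives an analytic function $\theta(s)=\theta(0)+\int_0^s \theta'(u)\,du$. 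A direct computation shows that $d/ds\bigl(\gamma'(s)/\|\gamma'(s)\|-(\cos\theta(s),\sin\theta(s))\bigr)$ vanishes as a consequence of the formula for $\theta'$, so since the two unit vectors agree at $s=0$ they agree throughout $[0,S]$. Thus $\gamma'(s)=\rho(s)(\cos\theta(s),\sin\theta(s))$ and $\kappa:=\theta'$ is analytic.

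\medskip
Finally, by construction $x'=\rho\cos\theta$, $y'=\rho\sin\theta$, and $\theta'=\kappa$ hold identically, and the initial condition $(x(0),y(0))=\gamma(0)$ is satisfied. Since the right-hand side of the ODE is analytic in $(x,y,\theta)$ (indeed, it is independent of $x,y$), Picard--Lindel\"of uniqueness applied with initial data $(\gamma(0),\theta(0))$ shows that the realized trajectory coincides with $\gamma$, completing the proof. The main obstacle is the lift step above; the rest is bookkeeping. One could equivalently invoke the covering-space lift of the analytic map $[0,S]\to S^1$, but the derivative formula is self-contained and keeps the argument elementary.
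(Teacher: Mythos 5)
Your proof is correct, and it is more careful than the paper's own argument at the one point where care is actually needed. The paper simply asserts that a continuous branch of $\arg(\gamma'(s))$ "can be chosen and is analytic," states that $\rho$ is analytic away from zero, and then declares that the generator system reproduces $\gamma$ by construction. You instead define $\theta$ by integrating the explicit formula $\theta'=(x'y''-y'x'')/(x'^2+y'^2)$, whose right-hand side is manifestly analytic with nonvanishing denominator, and then verify a posteriori that $e^{i\theta}$ recovers the unit tangent; this makes the existence and analyticity of the angle lift a theorem rather than an assertion, and your explicit appeal to Picard--Lindel\"of uniqueness closes the final step that the paper leaves implicit. The only imprecision is in the verification of the lift: the derivative of $\gamma'/\|\gamma'\|-(\cos\theta,\sin\theta)$ does not literally vanish identically; rather, that difference satisfies a linear homogeneous ODE with zero initial data (equivalently, the quantity whose derivative genuinely vanishes is $e^{-i\theta(s)}\,\gamma'(s)/\|\gamma'(s)\|$, or the inner product of the two unit vectors), from which the conclusion follows just as quickly. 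This is a one-line repair and does not affect the validity of the argument; net, your version is the more self-contained and rigorous of the two.
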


\begin{proof}
Since $\gamma$ is analytic and regular, $\gamma'(s)\neq 0$ on $[0,S]$, so the
argument function $\arg(\gamma'(s))$ can be chosen continuously and is analytic
on the interval (after selecting a consistent branch of the angle). The speed
$\rho(s)=\|\gamma'(s)\|$ is analytic as a composition of analytic functions with
the analytic norm on $\mathbb{R}^2$ away from zero. Then $\kappa=\theta'$ is
analytic. By construction, $\gamma'(s)=\rho(s)(\cos\theta(s),\sin\theta(s))$, so
the planar generator system reproduces $\gamma$ exactly.
\end{proof}

\subsection{Construction of the analytic generator tree}

\begin{construction}[$\mathsf{Compile}(D)$]
\label{cons:compile}
Fix a discrete tree specification $D$ with rooted tree
$T_D=(\mathcal{N}_D,\mathcal{E}_D)$. We build an analytic generator tree
$T_A=(\mathcal{N}_A,\mathcal{E}_A)$ and associated branch realizations as follows.

\begin{enumerate}
\item \textbf{Nodes and edges (combinatorics).}
Set $\mathcal{N}_A := \mathcal{N}_D$ and $\mathcal{E}_A := \mathcal{E}_D$, with
the same root. Thus $T_A$ and $T_D$ are identical as rooted directed graphs.

\item \textbf{Branchpoint locations (embedded case).}
If $D$ supplies an embedding $p:\mathcal{N}_D\to\mathbb{R}^d$, set the realized
branchpoint locations of the analytic construction to match these positions.
In the planar case ($d=2$), we require $\Pi(X_v(0))=p(v)$ for each node $v$.

\item \textbf{Branch segments (geometry along edges).}
For each edge $e=(u\to v)\in\mathcal{E}_A$, choose an analytic regular curve
$\gamma_e:[0,S_e]\to\mathbb{R}^2$ such that
\[
\gamma_e(0)=p(u),\qquad \gamma_e(S_e)=p(v)
\]
(in the embedded case). In the non-embedded case, choose any convenient planar
embedding $p$ consistent with the discrete parameters (e.g.\ a standard recursive
layout); the combinatorial claim does not depend on the particular choice.

\item \textbf{Generator data.}
Use Lemma~\ref{lem:curve_to_generator} to define analytic generator functions
$(\rho_e,\kappa_e)$ (equivalently a generator field $V_e$) on $[0,S_e]$ that
realize $\gamma_e$.

\item \textbf{State inheritance.}
At each node $u$ and for each outgoing edge $e=(u\to v)$, set the initial state
of the child branch to inherit the parent terminal state at the branch event,
so that the realized geometry is continuous at $p(u)$.
\end{enumerate}
\end{construction}

\subsection{Proof of theorem~\ref{thm:combinatorial_universality}}

\begin{proof}[Proof of theorem~\ref{thm:combinatorial_universality}]
We prove the two statements in the theorem.

\paragraph{(i) Combinatorial identity at every finite depth.}
By Construction~\ref{cons:compile}(1), we set $\mathcal{N}_A=\mathcal{N}_D$ and
$\mathcal{E}_A=\mathcal{E}_D$ with the same root and the same directed
parent--child relations. Therefore the identity map
$\varphi:\mathcal{N}_D\to\mathcal{N}_A$ is a rooted-tree isomorphism.
Restricting to any finite depth $k$ yields an isomorphism between the truncated
trees $T_D^{(k)}$ and $T_A^{(k)}$. Hence the induced discrete scaffold of the
analytic construction is combinatorially identical to the discrete tree of $D$
(node-for-node and edge-for-edge in the graph-theoretic sense) at every finite
depth.

\paragraph{(ii) Node-location coincidence in the embedded case.}
Assume $D$ provides an embedding $p:\mathcal{N}_D\to\mathbb{R}^2$ (or that we have
chosen one). For each edge $e=(u\to v)$ we choose an analytic regular curve
$\gamma_e$ connecting $p(u)$ to $p(v)$ (Construction~\ref{cons:compile}(3)).
By Lemma~\ref{lem:curve_to_generator}, there exists analytic generator data
realizing $\gamma_e$ exactly. Consequently, the realized endpoint of the branch
segment for $e$ is $\gamma_e(S_e)=p(v)$. State inheritance at the branchpoint
ensures that all child branches emanate from the same realized point $p(v)$.
By induction over depth, every branchpoint in the analytic construction coincides
with the corresponding discrete node location. This establishes the embedded-node
strengthening.

\paragraph{Compatibility with discrete parameters.}
The discrete parameters (contraction ratios, orientation labels, rule identifiers)
are preserved as labels on edges in the construction and may be used to select
the family of curves $\{\gamma_e\}$ (e.g.\ by choosing lengths or turning
directions consistent with the labels). This does not affect the combinatorial
isomorphism established in (i), and in the embedded case the node locations are
fixed by $p$ regardless of the particular analytic curve chosen between them.
\end{proof}

\section{Proof of canopy set equivalence}

This appendix provides a proof of Theorem~2 stated in Section~5.4.  
We assume familiarity with the analytic generator framework introduced in Sections~2--4 and with the notion of the induced discrete scaffold defined in Section~5.2.

\subsection{Induced discrete scaffold}

Let $\mathcal{T}$ be an analytic fractal tree generated by recursive application of branch events.  
At each finite branching depth $k$, the construction yields a finite set of branchpoints
\[
\mathcal{N}_k = \{ p_\alpha^{(k)} \} \subset \mathbb{R}^d,
\]
where each $p_\alpha^{(k)}$ is the realized image of a generator state inherited at a branch event.

By construction:
\begin{itemize}
\item branchpoints are inherited exactly under the state inheritance rule,
\item parent--child relations are determined solely by the branching schedule in generator space,
\item no geometric offsets are introduced at branching.
\end{itemize}

Thus, the sets $\mathcal{N}_k$ together with their parent--child relations 
define a rooted discrete tree, referred to as the \emph{induced discrete scaffold}.

\begin{lemma}
At each finite depth $k$, the induced discrete scaffold coincides with the 
discrete fractal tree obtained by composing the contraction maps $\{F_i\}$ 
according to the same branching schedule.
\end{lemma}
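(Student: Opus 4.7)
The plan is to proceed by induction on branching depth $k$, using an address-based labeling of nodes in both trees. First I would fix notation: let $\mathcal{N}_k$ denote the scaffold branchpoints at depth $k$, and let $\mathcal{N}_k^{D}$ denote the discrete nodes obtained by applying all length-$k$ compositions $F_{i_1}\circ\cdots\circ F_{i_k}$ to the root $p_\emptyset$. Both sets carry a rooted tree structure whose edges are labeled by a choice of child index $i\in\{1,\dots,m\}$ drawn from the common branching schedule. Label each node at depth $k$ by its address word $\alpha=(i_1,\dots,i_k)$; coincidence of the two trees then reduces to verifying, for every finite address $\alpha$, that the scaffold branchpoint $p_\alpha$ equals $F_{i_1}\circ\cdots\circ F_{i_k}(p_\emptyset)$, and that the parent--child relations agree.

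For the base case, the root realization $X^{(0)}$ is initialized so that $\Pi(X^{(0)}(0))=p_\emptyset$, matching $\mathcal{N}_0 = \mathcal{N}_0^{D} = \{p_\emptyset\}$. For the inductive step, I would assume that at depth $k$ the scaffold and discrete trees agree as rooted embedded trees, and fix an address $\alpha$ of length $k$ with $p_\alpha = F_{i_1}\circ\cdots\circ F_{i_k}(p_\emptyset)$. At the branch event terminating the parent branch, the no-offset state inheritance rule forces $X_{\alpha i}(0)=X_\alpha(s_b)$ for each child $i$, so the realized child curve begins exactly at the point $p_\alpha$. By Construction~\ref{cons:compile} together with Lemma~\ref{lem:curve_to_generator}, the generator data $(\rho_{\alpha i},\kappa_{\alpha i})$ on the child edge are chosen so that the analytic branch segment terminates at the prescribed embedded node location $F_i(p_\alpha)$, with per-branch contraction ratio and orientation label matching those encoded in $F_i$. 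Hence $p_{\alpha i}=F_i(p_\alpha)=F_{i_1}\circ\cdots\circ F_{i_k}\circ F_i(p_\emptyset)$, closing the induction.

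The main obstacle is the identification of the per-branch generator modifications with the action of the discrete similarity $F_i$. In the compilation-first direction adopted in Appendix~A this identification is essentially built in, since each analytic curve $\gamma_e$ is selected to join prescribed endpoints and to carry the specified similarity data; the endpoint identity therefore holds by design. The delicate point is ensuring that the child tangent direction, fixed by the no-offset constraint to equal $\theta(s_b)$, is compatible with the orientation label of $F_i$; this is absorbed into the choice of $\kappa_{\alpha i}$, so that the total turning along the child segment produces the correct outgoing orientation at $F_i(p_\alpha)$. Once node-location coincidence is established, the combinatorial structure transports automatically, since both trees share the same branching schedule and the same alphabet of child indices, and the rooted-tree isomorphism is realized by the identity on addresses.
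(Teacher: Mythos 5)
Your overall strategy---induction on depth with address words, using no-offset state inheritance for the starting point of each child branch and the choice of generator data for its endpoint---is essentially an explicit, expanded version of the paper's own (much terser) argument, which simply asserts that matching scaling/orientation parameters plus offset-free inheritance force each branchpoint to coincide with the image of the root under the corresponding finite composition. So the route is the same; the problem is in how you close the induction.

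The chained identity $p_{\alpha i}=F_i(p_\alpha)=F_{i_1}\circ\cdots\circ F_{i_k}\circ F_i(p_\emptyset)$ is false in general: $F_i\bigl(F_{i_1}\circ\cdots\circ F_{i_k}(p_\emptyset)\bigr)$ prepends $F_i$ to the composition, whereas the discrete child of the node with address $(i_1,\dots,i_k)$ is obtained by appending it, i.e.\ $p_{\alpha i}=(F_{i_1}\circ\cdots\circ F_{i_k})\bigl(F_i(p_\emptyset)\bigr)$. These two points differ unless $F_i$ commutes with the accumulated composition. The geometrically correct recursion is
\[
p_{\alpha i} \;=\; p_\alpha + A_\alpha\bigl(F_i(p_\emptyset)-p_\emptyset\bigr),
\]
where $A_\alpha$ is the linear part of $F_{i_1}\circ\cdots\circ F_{i_k}$; this is what the analytic construction actually reproduces, since the child branch starts at $p_\alpha$ with the \emph{inherited} orientation and the \emph{accumulated} speed scaling $\prod_{j\le k}\lambda_{i_j}$, not with the global similarity $F_i$ applied to $p_\alpha$. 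To repair the induction you must therefore strengthen the inductive hypothesis: track not only the node location $p_\alpha$ but also the accumulated similarity data (net rotation and product of contraction ratios) carried by the branch at address $\alpha$, and verify that the generator inheritance rule $\rho_{\alpha i}=\lambda_i\rho_\alpha$, $\theta_{\alpha i}(0)=\theta_\alpha(s_b)$ propagates exactly the linear part $A_{\alpha i}=A_\alpha\circ L_i$. Your appeal to Construction~\ref{cons:compile} (where endpoints are pinned to the prescribed embedding by fiat) papers over this in the compilation-first reading, but the lemma as stated in Appendix~B concerns the tree built by recursive generator inheritance, where the matching of accumulated similarities is precisely the content that must be proved rather than assumed.
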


\begin{proof}
Each branch event applies a generator inheritance rule whose scaling and 
orientation parameters match those of the corresponding contraction map $F_i$. 
Because branchpoints are inherited without offset, the realized location of 
each branchpoint coincides with the image of the root point under the 
corresponding finite composition of maps $F_i$. The combinatorial structure of 
parent--child relations is identical by construction.
\end{proof}

\subsection{Endpoint sets and Hausdorff convergence}

Let $E_k \subset \mathbb{R}^d$ denote the set of endpoints of branches at depth $k$ in the analytic construction, and let $A_k$ denote the corresponding set of points obtained by applying all length-$k$ compositions of the maps $F_i$ to the root point.

By the previous lemma,
\[
E_k = A_k
\]
for all finite depths $k$.

The sequence $\{A_k\}$ converges in the Hausdorff metric to the unique compact attractor $\mathcal{A}$ of the iterated function system $\{F_i\}$.

\subsection{Proof of theorem~2}

\begin{proof}[Proof of theorem~2]
The canopy set $\mathcal{C}$ of the analytic fractal tree is defined as the set of accumulation points of the endpoint sets $E_k$. Since $E_k = A_k$ for all $k$, and since $\{A_k\}$ converges in the Hausdorff sense to $\mathcal{A}$, it follows that
\[
\mathcal{C} = \mathcal{A}.
\]

The smooth interpolation of geometry along analytic generator trajectories does not affect the location of branchpoints or endpoints, and therefore does not alter the accumulation set. The limit geometry is determined entirely by the combinatorial branching structure 
and contraction ratios encoded in the generator inheritance rules.
\end{proof}

\subsection{Interpretation}

This result establishes that analytic generator--defined fractal trees and their discrete counterparts share identical global limit sets. The difference between the two constructions lies exclusively in the local realization of geometry between branchpoints: discrete constructions interpolate edges symbolically, while analytic generators interpolate them smoothly.

Consequently, fractality is shown to be an organizational property of branching and scaling rather than a consequence of local geometric irregularity.

\subsection{Hausdorff convergence of endpoint sets}

Let $E_k \subset \mathbb{R}^d$ denote the set of branch endpoints obtained after $k$ levels of recursive branching in an analytic fractal tree. Assume that each branch event applies a finite family of generator inheritance rules with uniform contraction ratios $\lambda_i \in (0,1)$, and let
\[
\lambda = \max_i \lambda_i < 1.
\]

\begin{lemma}
Under the uniform contraction assumption, the sequence of endpoint sets $\{E_k\}_{k\ge 1}$ converges in the Hausdorff metric to a unique nonempty compact set $\mathcal{C} \subset \mathbb{R}^d$.
\end{lemma}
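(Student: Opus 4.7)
The plan is to reduce the lemma to the classical Hutchinson fixed-point argument for iterated function systems, leveraging the scaffold identification $E_k = A_k$ already established in this appendix, where $A_k$ denotes the set of images of the root branchpoint $p_0$ under all length-$k$ compositions of the contraction maps $\{F_i\}$.

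I would work in the complete metric space $(\mathcal{K}(\mathbb{R}^d), d_H)$ of nonempty compact subsets of $\mathbb{R}^d$ under the Hausdorff metric. The first key step is to verify that the Hutchinson operator $\Phi(K) := \bigcup_i F_i(K)$ is a $\lambda$-contraction on this space, using the standard estimates $d_H(A \cup B,\, C \cup D) \le \max\{d_H(A,C), d_H(B,D)\}$ together with $d_H(F_i(A), F_i(B)) \le \lambda_i\, d_H(A,B)$. Banach's fixed point theorem then yields a unique compact fixed point $\mathcal{C}^\star$ of $\Phi$, and for every $K_0 \in \mathcal{K}(\mathbb{R}^d)$ the iterates $\Phi^k(K_0)$ converge to $\mathcal{C}^\star$ in $d_H$.

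The second step is to identify $E_k$ with a Hutchinson iterate. Taking $K_0 = \{p_0\}$, a straightforward induction shows that $\Phi^k(\{p_0\})$ is precisely the union of images of $p_0$ under all length-$k$ compositions of the $F_i$, which coincides with $A_k$. Combined with $E_k = A_k$ from the preceding lemma, this gives $E_k \to \mathcal{C}^\star$ in the Hausdorff metric, establishing the nonempty compact limit $\mathcal{C} := \mathcal{C}^\star$ asserted by the lemma.

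The main obstacle is not analytic --- the Hutchinson argument is textbook --- but a small bookkeeping matter: one must verify that the indexing of length-$k$ compositions implicit in $\Phi^k(\{p_0\})$ matches the branching schedule used in the definition of $E_k$, and that the Hausdorff limit constructed here agrees with the canopy set $\mathcal{C}$ of Definition~\ref{def:canopy}. The latter is handled by observing that under uniform contraction the endpoint sets form a Cauchy sequence whose Hausdorff limit necessarily equals $\overline{\bigcup_k E_k}$, so the two formulations of the canopy coincide; this reconciliation should be made explicit to close the gap with the accumulation-set definition used in Section~5.1.
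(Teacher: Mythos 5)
Your main argument is correct and is essentially the paper's own proof, spelled out in more detail: the paper simply asserts that $E_k$ equals the set of images of the root point under length-$k$ compositions of the $F_i$ and then cites ``standard results from iterated function system theory'' for Hausdorff convergence, whereas you make the standard result explicit via the Hutchinson operator $\Phi$ and Banach's fixed point theorem on $(\mathcal{K}(\mathbb{R}^d),d_H)$. That part is fine and complete, and the bookkeeping identification $\Phi^k(\{p_0\})=A_k=E_k$ is exactly what the preceding lemma in the appendix supplies.

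The one genuine problem is your closing reconciliation claim that the Hausdorff limit of the Cauchy sequence $\{E_k\}$ ``necessarily equals $\overline{\bigcup_k E_k}$.'' This is false in general: take $E_k=\{1/k\}\subset\mathbb{R}$, which is Hausdorff--Cauchy with limit $\{0\}$, while $\overline{\bigcup_k E_k}=\{0\}\cup\{1/k:k\ge1\}$. In the tree setting one has $\overline{\bigcup_k E_k}=\mathcal{A}\cup\bigcup_k E_k$, and this collapses to the Hausdorff limit $\mathcal{A}$ only when every finite-depth endpoint already lies in $\mathcal{A}$, i.e.\ only when the root point $p_0$ belongs to the attractor (since then invariance gives $A_k=\Phi^k(\{p_0\})\subset\Phi^k(\mathcal{A})=\mathcal{A}$). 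For a generic fractal tree the first branchpoint is \emph{not} in the canopy, so the closure-of-union set of Definition~\ref{def:canopy} strictly contains the Hausdorff limit. The correct identification is with the \emph{accumulation set} of $\bigcup_k E_k$ (equivalently $\bigcap_m\overline{\bigcup_{k\ge m}E_k}$), which does coincide with $\lim_{d_H}E_k$ here because each $E_k$ is finite. To be fair, this tension is already present in the paper, whose Definition~\ref{def:canopy} uses the closure of the union while the proofs of Theorem~\ref{thm:canopy_equivalence} and of this lemma silently switch to the accumulation-set reading; but your proposed fix, as stated, asserts an equality that fails, so you should either restrict to the accumulation-set definition or add the hypothesis $p_0\in\mathcal{A}$.
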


\begin{proof}[Proof]
By construction, $E_k$ coincides with the set of points obtained by applying 
all length-$k$ compositions of the corresponding contraction maps to the root 
point. Uniform contraction implies that the diameters of these images shrink 
geometrically. Standard results from iterated function system theory therefore 
guarantee that $\{E_k\}$ converges in the Hausdorff sense to a unique compact 
limit set, which is precisely the canopy set $\mathcal{C}$. \qedhere
\end{proof}

This convergence establishes the existence of the canopy set and underpins its 
identification with the attractor of the corresponding discrete fractal 
construction.

\bibliographystyle{amsalpha}
\bibliography{references}

\end{document}